\newcommand{\mc}[1]{\mathcal{#1}}
\begin{document}

\parskip 1ex            
\parindent 5ex		

\newcommand{\rbrac}[1]{\left(#1\right)}
\newcommand{\sbrac}[1]{\left[ #1\right]}
\newcommand{\cbrac}[1]{\left\{ #1\right\}}
\newtheorem{theorem}{Theorem}[section]
\newtheorem{lemma}[theorem]{Lemma}
\newtheorem{conjecture}[theorem]{Conjecture}
\newtheorem{proposition}[theorem]{Proposition}
\newtheorem{corollary}[theorem]{Corollary}
\newtheorem{claim}[theorem]{Claim}
\newcommand{\G}{\Gamma}

\title{The sum-free process}
\author{
Patrick Bennett\thanks{Mathematics Department, Western Michigan University, Kalamazoo, Mi, 49008. Email: {\tt patrick.bennett@wmich.edu}. Research supported in part by Simons Foundation Grant \#426894.
}}
\date{}
\maketitle

\begin{abstract}
$S \subseteq \mathbb{Z}_{2n}$ is said to be sum-free if $S$ has no solution to the equation $a+b=c$. The sum-free process on $\mathbb{Z}_{2n}$ starts with $S:=\emptyset$, and iteratively inserts elements of $\mathbb{Z}_{2n}$, where each inserted element is chosen uniformly at random from the set of all elements that could be inserted while maintaining that $S$ is sum-free. We prove a lower bound (which holds with high probability) on the final size of $S$, which matches a more general result of Bennett and Bohman (\cite{Hind}), and also matches the order of a sharp threshold result proved by Balogh, Morris and Samotij (\cite{BMSsumfree}). We also show that the set $S$ produced by the process has a particular non-pseudorandom property, which is in contrast with several known results about the random greedy independent set process on hypergraphs. 
\end{abstract}

\section{Introduction}

Let $\mathcal{H}$ be a hypergraph. A set of vertices $S \subset V(H)$ is called \emph{independent} if $S$ contains no edge of $\mathcal{H}$. The random greedy algorithm for independent sets starts with $S= \emptyset$, and then randomly inserts elements into $S$ so long as $S$ remains independent. Specifically, at the $i^{th}$ step, we put $S:=S \cup \{v_i \}$ where $v_i$ is chosen uniformly at random from all vertices $v$ such that $S \cup \{v \}$ is independent (we halt when there are no such vertices $v$). The algorithm terminates with a maximal independent set. 

One notable instance of this algorithm is the $H$-free process for any fixed $k$-uniform hypergraph $H$ (where $k \ge 2$ so $H$ might be a graph). The process starts with an empty hypergraph on vertex set $[n]$, and iteratively inserts randomly chosen hyperedges  so long as we never create a subhypergraph isomorphic to $H$. The $H$-free process can be viewed as an instance of the random greedy independent set algorithm running on a hypergraph $\mathcal{H}$ with vertex set $\binom{[n]}{k}$, where each edge of the hypergraph $\mathcal{H}$ corresponds to a subset of $\binom{[n]}{k}$ forming a copy of $H$. 

 Bohman \cite{Bo10} analyzed the $H$-free process when $H$ is the graph $K_3$, determining (up to a constant) how long the process lasts , and bounding the independence number of the final graph formed. Bohman's analysis of the process included information about the independence number of the graph produced, and gave a second proof that the Ramsey number $R(3,t)$ is $\Omega \left( \frac{t^2}{\log t} \right)$. In the same paper, Bohman considered the $K_4$-free process and improved the best known lower bound on $R(4,t)$. Bohman and Keevash \cite{Hfree} went on to analyze the $H$-free process for many other graphs $H$, resulting in new lower bounds on $R(s,t)$ for fixed $s \ge 5$, and new lower bounds on the Tur\'{a}n numbers of certain bipartite graphs. More recently, Bohman and Keevash \cite{BKtridynamic}, and independently Fiz Pontiveros,  Griffiths, and Morris \cite{FGM} proved that the $K_3$-free process terminates with $\left(\frac{1}{2 \sqrt{2}} + o(1) \right) \log^{1/2} n \cdot n^{3/2}$ edges (and also gave better bounds on the indpendence number of the graph produced by the $K_3$-free process). Other specific instances of $H$-free processes have been further studied, on some of which there are upper and lower bounds (matching up to a constant factor) on the final size of the graph, notably the $K_4$-free process by Warnke \cite{Wa3} and independently Wolfovitz \cite{Wz2}; the $C_\ell$-free process by Warnke \cite{Wa2} and independently  Picollelli \cite{P3}; and the $(K_4-e)$-free process (also known as the diamond-free process) by Picollelli \cite{P1}. With Bohman, the author in \cite{Hind} considers the independent set process on a general class of hypergraphs, and proved a lower bound (on the size of the final independent set) which generalizes many of the known lower bounds for specific hypergraphs. The general bound in \cite{Hind} applies to some instances of the $H$-free process where $H$ is a hypergraph, as well as the $k$-AP-free process (which chooses elements of $\mathbb{Z}_n$ while avoiding a $k$-term arithmetic progression).

\begin{theorem}
\label{theory:main}
Let $r$ and $ \epsilon > 0 $ be fixed.  Let 
$ {\mathcal H} $ be a $ r$-uniform, $D$-regular hypergraph on $N$ 
vertices such that $ D > N^{\epsilon} $.  Define the {\em degree} of a set $ A \subset V(\mathcal{H})$ to be the number of edges of 
$ {\mathcal H} $ that contain $A$.
For $ a = 2, \dots, r-1 $ we define $ \Delta_a( {\mathcal H}) $
to be the maximum degree of $A$ over $A \in \binom{V}{a} $.  We also
define the {\em $b$-codegree} of a pair of distinct vertices $ v,v'$ to be 
the number of edges $e,e' \in {\mathcal H} $ such that $ v \in e \setminus e', v' \in e' \setminus e$ 
and $ |e \cap e'|=b $.  We let $ \G_b ( \mc{H}) $ be the maximum 
$b$-codegree of $ {\mathcal H} $. 

If
\begin{equation}
\label{eq:degcond}
 \Delta_\ell( {\mathcal H} ) < D^{ \frac{r-\ell}{r-1} - \epsilon} \ \ \ \text{ for }
\ell = 2, \dots, r-1 
\end{equation}
and 
\begin{equation}
 \Gamma_{r-1}( {\mathcal H} ) < D^{1- \epsilon} \label{eq:codegcond}
 \end{equation}
then the random greedy independent set
algorithm produces an independent set $S$ in $ {\mathcal H} $ with
\begin{equation}
\label{eq:lowerbound}
|S| = \Omega\left( N \cdot \left( \frac{\log N}{ D} \right)^{\frac{1}{r-1}} \right)
\end{equation}
with probability $ 1 - \exp\left\{ - N^{\Omega(1)} \right\} $.
\end{theorem}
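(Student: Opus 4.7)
The plan is the \emph{differential equation method}: parametrize the process by a continuous time $t$ scaled so that $t_{\max} = \Theta\bigl((\log N)^{1/(r-1)}\bigr)$ corresponds to $i^\ast := \Theta\bigl(N(\log N/D)^{1/(r-1)}\bigr)$ steps, and track an ensemble of random variables each of which stays close to a deterministic trajectory with probability $1 - \exp(-N^{\Omega(1)})$. The primary variable is $Q(i)$, the number of \emph{open} vertices (those still legal to add to $S$) at step $i$. A mean-field heuristic, under which the hypergraph restricted to the open vertices still looks generic, predicts $Q(i)/N \approx q(t_i)$ where $q$ satisfies an ODE of the form $q' \sim -q^{r-1}$; such $q$ stays positive throughout $[0, t_{\max}]$, which is exactly what makes the process last $i^\ast$ steps.

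To close the system, further variables are needed: for each $a \in \{1,\ldots,r-1\}$ and each open $a$-subset $A$, the count of partial edges extending $A$ whose remaining $r-a$ vertices are all open, together with the maxima of these counts over $A$. The degree hypothesis \eqref{eq:degcond} keeps the initial values of these auxiliary counts strictly below the leading-order prediction, so they only contribute as lower-order error. The codegree hypothesis \eqref{eq:codegcond} enters when computing one-step changes: it bounds the contribution to $\mathbb{E}[X_{i+1} - X_i \mid \mathcal{F}_i]$ of pairs of edges that overlap heavily, which are exactly the configurations that could create large, correlated jumps and derail concentration.

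The heuristic is upgraded to a theorem using Freedman's inequality. For each tracked $X_i$, write $X_{i+1} - X_i = \mathbb{E}[X_{i+1} - X_i \mid \mathcal{F}_i] + \zeta_{i+1}$; show that $\pm(X_i - x(t_i)) - \varepsilon_i$ are both supermartingales for an appropriate slowly-growing error schedule $\varepsilon_i$; bound the one-step jumps and the quadratic variation using the degree and codegree hypotheses; and apply Freedman. A union bound over the $\mathrm{poly}(N)$ tracked variables and steps yields the desired concentration. A stopping-time argument, halting the process at the first deviation, shows that this stopping time exceeds $i^\ast$.

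The main obstacle is the bookkeeping in the second step: designing the ensemble of auxiliary variables so that it is closed under one-step differencing up to negligible error. Computing the expected change of an extension-count requires decomposing by how the newly chosen $v_i$ intersects the counted edges, and controlling the contribution from edges sharing many vertices with another counted edge is exactly what \eqref{eq:codegcond} is for. Once this closure is established and the leading-order expected changes match the ODE governing $q$, what remains is a standard, if technically demanding, application of martingale concentration to a collection of random variables parametrized by sets $A$ of bounded size.
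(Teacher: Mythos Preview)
This theorem is not proved in the present paper at all: it is quoted from \cite{Hind} (Bennett--Bohman) as background, and the paper's own work is the proof of Theorem~\ref{thm:main} on the sum-free process. So there is no ``paper's own proof'' of this statement to compare against.

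That said, your outline is a fair high-level description of the argument in \cite{Hind}: track $Q$ and the partial-edge extension counts $d_a(A)$ for $a=2,\dots,r-1$ along heuristic trajectories $q(t)\approx e^{-c t^{r-1}}$, use the degree hypothesis \eqref{eq:degcond} to control initial values and one-step changes, use the codegree hypothesis \eqref{eq:codegcond} to bound the overlap terms that would otherwise spoil the martingale increments, and apply a martingale inequality with a union bound over all tracked variables and starting times. One small correction: in \cite{Hind} (and in the analogous arguments in this paper for Theorem~\ref{thm:main}) the concentration step for the main ensemble uses Azuma--Hoeffding with the critical-interval trick rather than Freedman; Freedman is reserved for variables whose increments are rare $0/1$ jumps (here, $D_{1,R}(0)$). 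Also, your proposal is only a plan, not a proof: the genuine content is in designing error functions $f_V,g_V$ that simultaneously satisfy all the variation inequalities, and your sketch does not address that step.
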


The indpendent sets produced by the algorithm tend to have pseudorandom properties. For example, the $K_3$-free process produces a graph whose independence number is roughly the same as it would be in a random graph with the same edge density (\cite{Bo10}, \cite{BKtridynamic}, \cite{FGM}), and for any fixed $K_3$-free graph $G$, the number of copies of $G$ in the graph produced by the $K_3$-free process is roughly the same as it would be in a random graph with the same edge density (see \cite{GW}).  \cite{Hind} also has a pseudorandom type result for the independent sets produced by the algorithm, which generalized Wolfovitz's result and which they used to bound the Gowers norm of the set produced by the $k$-AP-free process. We state this result from \cite{Hind} now.

\begin{theorem}
\label{theory:count}
Fix $s$ and a $s$-uniform hypergraph $ {\mathcal G} $ on vertex set $V(\mathcal{H})$ (i..e the 
same vertex set as the hypergraph $\mc{H}$).  We let $ X_\mc{G} $ be the number of edges in $\mc{G}$ 
that are contained in the independent set produced at the $i^{th}$ step of the random greedy process on $ \mc{H} $.  Set
$ p = p(i) = i/N$ and let $ i_{\rm max} $ be the lower bound (\ref{eq:lowerbound}) on the size 
of the independent set given by the 
random greedy algorithm given in Theorem~\ref{theory:main}.
If no edge of $ \mc{G}$ contains an edge of $ \mc{H}$, $ i < i_{\rm max}$ is fixed, $ | \mc{G}| p^s \to \infty $ and 
$ \Delta_a( \mc{G}) = o( p^a |\mc{G}|) $ for $ a =1, \dots, s-1$ then
\[ X_\mc{G} = | \mc{G}| p^{s} (1+o(1)). \]
with high probability.
\end{theorem}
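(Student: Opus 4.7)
\medskip

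\noindent \textbf{Proof plan.} I would use the second-moment method and close with Chebyshev. The crucial input is a fine-grained version of the tracking that proves Theorem~\ref{theory:main}: namely, for any fixed-size set $T \subseteq V(\mc{H})$ containing no edge of $\mc{H}$,
\[
\Pr\bigl[T \subseteq S(i)\bigr] \;=\; (1+o(1))\, p^{|T|},
\]
together with the corresponding approximate-independence estimate for two disjoint such sets. Such estimates arise naturally from the differential-equation analysis of \cite{Hind}, applied to the random variable that counts surviving prescribed sets instead of surviving vertices. Summing this estimate over $T \in \mc{G}$ -- which is legitimate since the hypothesis that no edge of $\mc{G}$ contains an edge of $\mc{H}$ makes every $T \in \mc{G}$ eligible -- yields $\mathbb{E}[X_\mc{G}] = (1+o(1))|\mc{G}|p^{s}$, which tends to infinity by assumption.

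For the variance I would expand
\[
\mathrm{Var}(X_\mc{G}) \;=\; \sum_{a=0}^{s}\; \sum_{\substack{T,T' \in \mc{G} \\ |T \cap T'|=a}} \bigl(\Pr[T \cup T' \subseteq S(i)] - \Pr[T \subseteq S(i)]\,\Pr[T' \subseteq S(i)]\bigr)
\]
and split by $a$. The $a=0$ layer contributes $o(|\mc{G}|^{2} p^{2s})$ directly from the disjoint-pair estimate, since each covariance is $o(p^{2s})$. For $a \ge 1$ the number of ordered pairs with $|T \cap T'|=a$ is at most $|\mc{G}|\binom{s}{a}\Delta_{a}(\mc{G})$, and each covariance is bounded crudely by $\Pr[T \cup T' \subseteq S(i)] \le (1+o(1))\,p^{2s-a}$ (trivially zero when $T \cup T'$ contains an edge of $\mc{H}$). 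The hypothesis $\Delta_{a}(\mc{G}) = o(p^{a}|\mc{G}|)$ then makes this layer $o(|\mc{G}|^{2} p^{2s})$ as well. Altogether $\mathrm{Var}(X_\mc{G}) = o(\mathbb{E}[X_\mc{G}]^{2})$, and Chebyshev delivers the whp conclusion.

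The real obstacle is not the variance bookkeeping but establishing the one-set and two-set survival estimates with the required $1+o(1)$ precision and appropriate uniformity. Extending the martingale/self-correcting analysis of \cite{Hind} to these new random variables requires (i) writing down the correct deterministic trajectory for $\Pr[T \subseteq S(i)]$ and its joint analogue, (ii) verifying that their one-step changes follow this trajectory in expectation, and (iii) re-deriving the codegree-type bounds on one-step fluctuations so that Azuma-type concentration forces the pointwise error to be $o(p^{|T|})$ rather than a bounded multiplicative factor. Once these tracking lemmas are in place, the expansion above and the application of Chebyshev are essentially routine.
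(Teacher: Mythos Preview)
This theorem is not proved in the present paper at all: it is quoted from \cite{Hind} as background, so there is no ``paper's own proof'' here to compare against.

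That said, your plan diverges from the approach that the machinery of \cite{Hind} (and of this paper) naturally supports, and the divergence is where the gap lies. The differential-equations/martingale method produces \emph{with-high-probability concentration of random variables around deterministic trajectories}; it does not hand you pointwise probabilities $\Pr[T\subseteq S(i)]$ for a single fixed $s$-set $T$. That quantity is a deterministic number, so speaking of ``Azuma-type concentration forc[ing] the pointwise error to be $o(p^{|T|})$'' conflates a probability with a random variable; and the indicator $\mathbf 1[T\subseteq S(i)]$ is $\{0,1\}$-valued, so there is nothing to concentrate. Your variance bookkeeping for $a\ge 1$ is fine, but the entire argument rests on the one-set estimate $\Pr[T\subseteq S(i)]=(1+o(1))p^{s}$ and the disjoint-pair estimate needed for the $a=0$ layer, and you have not given a mechanism for obtaining either from the tracking analysis.

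The route actually taken in \cite{Hind} is to track $X_{\mc G}$ directly through the process, via a hierarchy of auxiliary counts (the number of $T\in\mc G$ with exactly $j$ vertices already chosen and the remaining $s-j$ still open, for $j=0,\dots,s$), exactly in the spirit of the $E_2,E_3,D_{k,\cdot}$ variables tracked in this paper. One writes the expected one-step change of each level in terms of the next, shows each stays near $\binom{s}{j}|\mc G|p^{j}q^{s-j}$-type trajectories, and reads off $X_{\mc G}=|\mc G|p^{s}(1+o(1))$ at the top. The hypotheses $\Delta_a(\mc G)=o(p^{a}|\mc G|)$ enter to bound the one-step changes (Lipschitz constants) in that tracking, not to control off-diagonal covariances in a second-moment expansion. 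This bypasses the second-moment method entirely and never requires the individual-set probabilities you take as input.
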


This paper addresses  the sum-free process. In this process we look for a set $S \subset \mathbb{Z}_{2n}$ such that $S$ has no solutions to the equation $a+b=c$. Define our edge set $E$ to be the family of all solutions $\{a,b,c\}$ to $a+b=c$ (Such edges $\{a, b, c\}$ are called \emph{Schur triples}). Note that $\{a, b, c\}$ may have $1, 2,$ or $3$ distinct elements. We write the generic form $\{a, b, c\}$ with the understanding that if, say, $b=c$ then we mean the set $\{a,b\}$ and not a multiset with two copies of $b$. Thus, $S$ is sum-free if and only if $S$ is an independent set in the hypergraph $\mathcal{H}$ with vertex set $\mathbb{Z}_{2n}$ and edge set $E$. $\mc{H}$ is not uniform but almost all of the edges have size $3$. Indeed, each vertex $v \neq 0$ is in $O(1)$ edges of size $2$ and no edge of size $1$.

Theorem \ref{theory:main} cannot be applied directly to the sum-free process, since $\mathcal{H}$ is not quite uniform, and also not quite regular. However it is nearly uniform and nearly regular, and these issues alone would not present much difficulty in the analysis. But there is a more crucial way in which $\mathcal{H}$ fails to satisfy the hypotheses of Theorem \ref{theory:main}: the codegree condition \eqref{eq:codegcond}. Consider the vertices $v$ and $-v$ for $v \neq 0, n$. Each of $v, -v$ is in about $D = \Theta(n)$ edges, but the $2$-codegree of $v, -v$ is also $\Theta(n)$ since whenever we have the equation $v+b=c$ we also have the equation $-v + c = b$. Thus, one of the points of interest in this paper is to see how to overcome this issue.

The largest sum-free subset of $\mathbb{Z}_{2n}$ is the set consisting of the odd numbers. Indeed, Balogh, Morris, and Samotij in \cite{BMSsumfree} proved that $p=\left( \frac{\log n}{3n} \right)^{1/2}$ is a sharp threshold above which we have the following: if $G_p$ is a random subset of $\mathbb{Z}_{2n}$ where each element is included with probability $p$ independently, then w.h.p. the largest sum-free subset of $G_p$ is simply the set of its odd elements. Roughly speaking, the fact that \emph{odd + odd = even} imposes a very specific structure on maximum sum-free subsets of random sets in $\mathbb{Z}_{2n}$. However, this special structure ceases to be relevant for very sparse random sets in $\mathbb{Z}_{2n}$. 

Our main theorem is as follows: 

\begin{theorem}\label{thm:main}
With high probability, the sum-free process run on $\mathbb{Z}_{2n}$ produces a set of size at least $$i_0:= \displaystyle \frac{1}{6}  n^{1/2} \log^{1/2} n.$$
\end{theorem}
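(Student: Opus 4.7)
The natural approach is the differential equation method (DEM). Let $Q_i \subseteq \mathbb{Z}_{2n}$ be the set of vertices still available at step $i$, i.e.\ $w \in Q_i$ iff $S_i \cup \{w\}$ is sum-free. When the next vertex $v$ is inserted, a vertex $w$ becomes killed iff a Schur triple $\{v, w, s\}$ with $s \in S_i$ is created; for each fixed $s$ this yields the three killing equations $w \in \{s-v,\ s+v,\ v-s\}$. Since these three values are (generically) spread essentially uniformly over $\mathbb{Z}_{2n}$, the expected one-step decrease in $|Q|$ is approximately $3 i |Q_i|/N$ with $N = 2n$. The resulting ODE $dq/di = -3iq/N$ yields the deterministic trajectory $|Q_i| \approx 2n \exp(-3i^2/(4n))$. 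At the stated $i_0$, this trajectory is of order $n^{3/4}(\log n)^{O(1)}$, comfortably inside the regime where martingale fluctuations are of lower order; the $(1 - 20\log\log n/\log n)$ factor in the statement is precisely the slack needed to absorb the concentration error.

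The main obstacle is the failure of the codegree hypothesis \eqref{eq:codegcond}, as flagged in the introduction: $\Gamma_2(v,-v) = \Theta(n) = \Theta(D)$ because each Schur triple $\{v, s, v+s\}$ has a twin $\{-v, s, v+s\}$ sharing two vertices. In a Bennett--Bohman--style analysis this high codegree inflates the variance bounds for the natural auxiliary variables used to track growth, so Theorem~\ref{theory:main} cannot be applied off the shelf. The plan is to exploit the very involution $v \mapsto -v$ that creates the problem: because the bad codegree contributions couple $v$ and $-v$ symmetrically, I would track $|Q_i|$ jointly with the auxiliary variable $P_i := |\{v : v,\, -v \in Q_i\}|$ counting vertices whose negatives are also available. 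The correlated $(v,-v)$ fluctuations are then absorbed into the evolution of $P_i$, so the residual fluctuations of $|Q_i|$ conditional on $P_i$ involve only genuinely low-codegree events and become amenable to standard concentration.

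The remaining steps are standard: (i) set up a filtration for the process and write $|Q_i|$, $P_i$ (and further auxiliary quantities as needed) as drift plus a martingale; (ii) compute the expected one-step drifts and show that they match the appropriate ODEs up to lower-order error, treating degenerate Schur triples (those involving $0$, $n$, or coincidences like $v = \pm s$) as negligible correction terms; (iii) bound the per-step worst-case change by $O(i)$ and the predictable quadratic variation cleanly, then apply Freedman's martingale inequality to obtain deviations $o(|Q_i|)$ valid up to time $i_0$; (iv) conclude that $|Q_{i_0}| \ge 1$ with probability $1 - \exp(-n^{\Omega(1)})$, which immediately yields the claimed lower bound on the final independent set. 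The hardest step will be (iii): some variance contributions must first be re-expressed in terms of joint $(v,-v)$ kill events, so that the pair-symmetry cancels what would otherwise look like an obstruction. This is exactly the place where the general framework of Theorem~\ref{theory:main} breaks down, and where a bespoke computation using the explicit additive structure of $\mathbb{Z}_{2n}$ is needed.
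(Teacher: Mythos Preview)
Your overall framing is right --- the differential-equations method with trajectory $|Q_i|\approx 2n\,e^{-3i^2/(4n)}$, and the $(v,-v)$ codegree as the obstruction --- but the proposed two-variable system $(|Q_i|,P_i)$ and the mechanism you describe for how $P_i$ fixes things both fall short.

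The drift $E[\Delta Q\mid\mathcal{F}_i]=-1-\tfrac{1}{|Q|}\sum_{q\in Q}D_2(q)$ is essentially $-2E_2/|Q|$, and neither $|Q|$ nor $P$ determines $E_2$; the paper has to track a whole hierarchy of per-vertex degree variables $D_{2,L}(v),D_{2,R}(v),D_{3,L}(v),D_{3,R}(v)$ for every $v$, together with the global edge-counts $E_2,E_3$ (the latter are what buy the sharp constant $1/\sqrt{3}$, since the local variables alone carry larger relative error). Relatedly, your increment bound $|\Delta Q|=O(i)$ is fatal: summed over $i_0$ steps either Azuma or Freedman yields deviations of order $n^{3/4}\log^{O(1)}n$, which is the same scale as $|Q_{i_0}|$ itself. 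The paper gets $|\Delta Q^{\pm}|=O(n^{1/4}\log^{O(1)}n)$ only \emph{after} establishing concentration of each $D_2(v)$ around $3n^{1/2}tp$ via its own critical-interval (self-correcting) supermartingale, and that concentration is what you would be assuming away.

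The paper's handling of the codegree is also structurally different from ``condition on $P$ and the rest is low-codegree.'' The key observation is that the twin edges $\{v,b,c\}$ (from $v+b=c$) and $\{-v,c,b\}$ (from $-v+c=b$) both place $v$ and $-v$ on the \emph{left} of the defining equation. One therefore splits every degree into a left part $D_{k,L}(v)$ and a right part $D_{k,R}(v)$; the moment $-v$ enters $S$, one has $D_{2,L}(v)=0$ identically and simply stops tracking the left variables for that $v$. For all remaining pairs one verifies $|D_2(v_1)\cap D_2(v_2)|=O(\log^2 n)$ whenever $v_1\ne\pm v_2$, which restores small Lipschitz constants for the degree martingales. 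Your variable $P_i$ does appear in the paper --- it is $D_{3,R}(0)$, the count of edges $\{0,q,-q\}$ with $q,-q\in Q$ --- but its role is to feed a separate chain $D_{3,R}(0)\to D_{2,R}(0)\to D_{1,R}(0)$ governing second-order error terms (and the non-pseudorandom statistic in the final section); it does not by itself decouple the bad fluctuations from the evolution of $|Q|$.
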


 We conjecture that a matching (up to a constant factor) upper bound holds, since intuitively the sum-free process should not produce sets that have density larger than the threshold in \cite{BMSsumfree}.
 
 Another point of interest in this paper is to demonstrate a non-pseudorandom statistic of the set $S$ produced by the sum-free process (in contrast to all the pseudorandom results concerning other instances of the hypergraph independent set process). It turns out that the set $S$ tends to contain a large number of pairs $v, -v$, much larger than one would expect in a (uniformly chosen) random set of size $|S|$. Indeed, if $|S| = \Theta(n^{1/2} \log^{1/2} n)$ were chosen uniformly at random, then we would expect $S$ to contain $\Theta(\log n)$ pairs $v, -v$. However, as we will see in the last section of the paper, the number of such pairs in $S$ actually grows like a power of $n$.

Our analysis of the sum-free process on $\mathbb{Z}_{2n}$ easily extends to  $\mathbb{Z}_{2n+1}$ with no substantial changes. Note that  $\mathbb{Z}_{2n+1}$ also has linear-sized sum-free subsets (e.g. the set of all odd numbers less than $n$). However, since there is no result analogous to \cite{BMSsumfree} for $\mathbb{Z}_{2n+1}$, we work in $\mathbb{Z}_{2n}$.

\section{The Algorithm and Associated Random Variables}

The random greedy algorithm for sum-free sets starts with a sum-free set $S(0):= \emptyset$, and a set $Q(0)$ of elements that may be inserted into the $S(0)$ without spoiling the sum-free property. Since $0+0=0$, the element $0$ cannot be part of any sum-free set. Any singleton except for $\{0\}$ is sum-free, so $Q(0) = \mathbb{Z}_{2n} \setminus \{0\}$. At step $i$, the algorithm selects an element $s(i)$ uniformly at random from $Q(i)$ and puts $S(i+1):=S(i) \cup \{s(i)\}$. The algorithm then determines  $Q(i+1)$, the set of elements that could potentially be inserted in $S(i+1)$ without spoiling the sum-free property. If $Q(i+1)$ is empty, the algorithm terminates with a maximal sum-free set. 

We call the elements of $S(i)$ \emph{chosen}, the elements of $Q(i)$ \emph{open}, and all other elements of $\mathbb{Z}_{2n}$ \emph{closed}. As we noted above, at the start of the algorithm $0$ is closed and every other element is open.



 We will define several more random variables which (roughly speaking) represent the degrees of vertices. 
 We will partition the set of edges containing $v$ according to what role $v$ plays in the corresponding equation $a+b=c$. For $v \in \mathbb{Z}_{2n}\setminus S(i) \setminus \{0\}$ and $k=1,2,3$ we define the random variables $D_{k,L} (v,i)$, the set of edges $e \in E$ such that $v \in e$, $e \setminus \{v\} \subseteq Q(i) \cup S(i)$,  $|e \cap Q(i) \setminus \{v\}| = k-1$, and $v$ appears on the left of the equation corresponding to $e$ (i.e. $v$ plays the role of $a$ or $b$ in the equation $a+b=c$); and for $v \in \mathbb{Z}_{2n}\setminus S(i)$, we define $D_{k,R} (v,i)$, the set of edges $e \in E$ such that $v \in e$, $e \setminus \{v\} \subseteq Q(i) \cup S(i)$,  $|e \cap Q(i) \setminus \{v\}| = k-1$ and $v$ is on the right side of the equation. We also define the random variable $$D_2 (v,i):= \{q \in Q(i): q \neq v, \exists e \in  D_{2, L}(v, i) \cup D_{2, R}(v, i). q \in e\}.$$  
Several of the variables we just defined take two arguments (a vertex $v$ and the step $i$). Sometimes for shorthand we will suppress the  $i$.

\section{Heuristically anticipating the trajectories}

We use some heuristics to anticipate the likely values of the random variables throughout the process. Intuitively, we will mostly assume that certain aspects of the hypergraph $\mathcal{H}(i)$ are the same as what they would be if $S(i)$ were a set of $i$ uniformly chosen random elements (rather than having been chosen to satisfy the sum-free property), each chosen with probability $i/2n$. In other words we are using a heuristic assumption of pseudorandomness to guess the trajectories of our variables as the process evolves. But due to the special relationship between $v$ and $-v$ in this hypergraph, we will have to be careful about assuming independence: we will see that $u$ and $v$ behave almost independently except when $u=-v$. Furthermore, the relationship between $v$ and $-v$ will cause the vertex $0$ to behave completely differently from other vertices. 

Consider a vertex $v \neq 0$. Since $v$ is in about $3n$ edges and these edges do not interact pathologically (e.g. almost all pairs of these edges are disjoint other than $v$), the probability that $v$ is open (i.e. that there is no edge $\{v, s_1, s_2\}$ with $s_1, s_2 \in S$) is heuristically 
\[
\rbrac{ 1-\rbrac{\frac{i}{2n}}^2}^{3n} \approx e^{-\frac{3i^2}{4n} } = e^{-\frac{3}{4} t^2} =: p(t)
\]
where we define $$t=t(i):= n^{-1/2}i.$$

Now consider another vertex $u \neq v, -v, 0$. We would like to guess the probability that both $u$ and $v$ are open. We should be careful to take possible correlations into account, so we imagine that we know $u$ is open and consider the effect on the probability $v$ is open. 
It turns out that the relevant thing to look for is pairs of edges $e \ni u$, $e' \ni v$ with $|e\cap e'|=2$. The pairs with $|e\cap e'|=2$ are worrisome since if one of $u, v$ gets closed by one of the edges then the other vertex also gets closed automatically. On the other hand since we know $u$ is open we already know that $v$ does not get closed via any such edge. But since there are few such edges, knowing that $u$ is open does not affect many of $v$'s chances to get closed. We estimate the probability that both $u$ and $v$ are open to be 
\[
P(u \mbox{ open}) \cdot P(v \mbox{ open}| u \mbox{ open})\approx \rbrac{ 1-\rbrac{\frac{i}{2n}}^2}^{3n}\rbrac{ 1-\rbrac{\frac{i}{2n}}^2}^{3n-O(1)} \approx p^2,
\]
so heuristically $u$ and $v$ behave independently. Thus we are ready to make the following predictions for $v \neq 0$ (note that the probability that an element is in $S$ is $i/2n= t/2n^{1/2}$): \newpage
$$Q \approx 2np \;\;\;\;\;\;\;\;\;\;\;\;\;\;\;\;\;\;\;\;\;\;\;\;  D_{3,L}(v) \approx 2np^2 \;\;\;\;\;\;\;\;\;\;\;\;\;\;\; D_{3,R}(v) \approx np^2 \;\;\;\;\;\;\;\;\;\;\;\;\;\;\;$$ $$D_{2,L}(v) \approx 2n^{1/2} tp\;\;\;\;\;\;\;\;\;\;\;\;\;\;\;D_{2,R}(v) \approx n^{1/2} tp.$$ 
Note that we are using our heuristic assumption of independence, for example when we say $ D_{3,R}(v) \approx np^2$ and it is important to note that since $v \neq 0$, $v$ is not in any equation $a+b=v$ such that $a=-b$ (and so $a$ and $b$ being open are treated as independent events).

Now we would like to guess the probability that both $v$ and $-v$ are open. So we imagine we know $-v$ is open, and we consider pairs of edges $e \ni v$, $e' \ni -v$ with $|e\cap e'|=2$. Now there are about $2n$ such pairs and we know that $v$ cannot be closed via any such edge, meaning there are only about $3n-2n=n$ edges that might still close $v$.  Thus we estimate the probability that both $v$ and $-v$ are open to be 
\[
P(-v \mbox{ open}) \cdot P(v \mbox{ open}| -v \mbox{ open})\approx \rbrac{ 1-\rbrac{\frac{i}{2n}}^2}^{3n}\rbrac{ 1-\rbrac{\frac{i}{2n}}^2}^{n} \approx p^{4/3}.
\]
Here we can begin to see the mildly pathological behavior of this process emerge: if $-v$ is open then $v$ is more likely to be open. We make the prediction $$D_{3, R}(0) \approx np^{4/3}.$$ Now we predict $D_{2, R}(0) $ by guessing the probability that $v$ is open given that $-v$ is chosen. If $-v$ is chosen then there are about $2n$ edges $e' \ni -v$ that have another edge $e \ni v$ with $|e \cap e'|=2$ and $v$ cannot be closed by any such edge. Thus we estimate the probability that $v$ is open and $-v$ is chosen to be 
\[
P(-v \mbox{ chosen}) \cdot P(v \mbox{ open}| -v \mbox{ chosen})\approx \frac{i}{2n} \rbrac{ 1-\rbrac{\frac{i}{2n}}^2}^{n} \approx \frac12 n^{-1/2}tp^{1/3}.
\]
Thus we make the prediction $$D_{2, R}(0) \approx n^{1/2}tp^{1/3},$$ which we note is larger than our prediction for $D_{2, R}(v)$ for any $v \neq 0$. Roughly speaking this means that if $-v$ is chosen then it is more likely $v$ will stay open for longer than it otherwise would, and intuitively this should mean that $v$ is more likely to eventually be chosen, so we should end up with a lot of pairs $\{v, -v\}$ in $S$. We will now see that this is the case. 
We predict the value of $D_{1, R}(0)$. At each step, the variable $D_{1, R}(0)$ either stays the same or increases by $1$. The probability of increasing is $$\frac{D_{2,R}(0)}{Q} \approx \frac{1}{2} n^{-1/2} t p^{-2/3}$$
integrating the above expression gives the prediction:

$$D_{1, R}(0) \approx \frac{1}{2} \left( p^{-2/3}-1\right)  \approx \frac{1}{2}  p^{-2/3}.$$
The variables $D_{1, R}(v)$ for $v \neq 0$ will be much smaller, and we will prove that later. 

We finish this section with a general conjecture about the random greedy algorithm for independent sets in hypergraphs. This conjecture ought to apply to all sufficiently ``nice" (almost uniform, almost regular, not too sparse) hypergraphs. In general, the algorithm should terminate soon after the number of open elements is negligible compared to the number we've already chosen. Our conjecture is that we can predict when this happens using the heuristically derived trajectory for $Q$. 
\begin{conjecture}\label{conj}
The step $i$ at which the random greedy independent set algorithm terminates w.h.p. is asymptotically the value $i$ such that $Q \approx i$. 
In particular, the sum-free process terminates when $2np \approx n^{1/2} t$, so when $$i = \left(\sqrt{\frac{2}{3}}+o(1) \right) n^{1/2} \log^{1/2} n$$
\end{conjecture}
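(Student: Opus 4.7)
The plan is to attack the sum-free specialization first, deferring the general hypergraph statement as a template. The backbone is the differential equation method, pushed well beyond the window needed to prove Theorem~\ref{thm:main}. The calibration of the target time itself is the easy part: setting the predicted trajectory $|Q(i)| \approx 2np(t)$ equal to $i = n^{1/2}t$ gives $2 e^{-3t^2/4} = t n^{-1/2}$, and taking logarithms yields $t^2 = \tfrac{2}{3}\log n + O(\log \log n)$, i.e.\ $i = (\sqrt{2/3}+o(1))\, n^{1/2}\log^{1/2}n$. Everything else is concentration and endgame control.

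First I would extend the dynamic tracking of all random variables introduced in Section~3 (namely $Q$, $E_3$, $E_2$ and the degree variables $D_{k,L}(v), D_{k,R}(v)$, together with the codegree-sensitive variables $D_{3,R}(0)$, $D_{2,R}(0)$, $D_{1,R}(0)$ that govern the $v,-v$ coupling) up to the candidate termination time. For each variable, I would compute the one-step conditional expected change in terms of the other tracked variables, verify that the resulting ODE system is solved by the heuristic trajectories of Section~3, and then apply Freedman's inequality against a stopping time for the first trajectory failure. The delicate calibration is to choose error bounds that degrade slowly enough in $p(t)$ to stay meaningful all the way to $t^2 \approx \tfrac{2}{3}\log n$, where $|Q|$ drops to polylogarithmic size and the relative fluctuations are no longer negligible. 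This gives the \emph{lower} bound on the termination time automatically: as long as the trajectories hold and $2np(t) \gg 1$, we have $Q(i)\neq\emptyset$, so w.h.p.\ the process does not terminate before $(\sqrt{2/3}-o(1))n^{1/2}\log^{1/2}n$.

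The hard part is the matching \emph{upper} bound on termination, i.e.\ showing the process has actually stopped by $(\sqrt{2/3}+o(1))n^{1/2}\log^{1/2}n$ and does not linger in a long tail. Once $|Q(i)|$ drops to $o(i)$, the one-step analysis ceases to be self-correcting, because the expected increment of each tracked variable is no longer much larger than its stochastic fluctuation, and the trajectory method breaks down. I would attempt a critical-window argument in the style of the endgame in the $K_3$-free process: show that just past the predicted threshold, each open vertex has, with high probability, a large number of $E_2$-edges to already-chosen partners, so that each subsequent pick closes a constant fraction of $Q$. Combined with a stochastic domination of $|Q|$ by a supermartingale with contractive multiplicative drift, this would force termination within $O(\log n)$ further steps and close the gap.

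This last step is the main obstacle, and extending it to the general conjecture is harder still: one would need a clean quasirandomness hypothesis on the residual hypergraph $\mc{H}(i)$ that persists all the way into the endgame, not merely in the bulk regime where Theorem~\ref{theory:main} applies. Formulating such a condition so that it is both verifiable in concrete cases (sum-free, $k$-AP-free, $H$-free) and strong enough to drive a generic contractive-drift argument is, in my view, the real content of the conjecture; the trajectory calculation that pins down the asymptotic constant is straightforward once it is granted.
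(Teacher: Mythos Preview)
This statement is a \emph{conjecture}; the paper does not prove it and explicitly flags even the lower bound as out of reach with current methods. Your proposal is really an outline, and its main gap is in the lower-bound step, which you treat as a routine extension of the tracking (``choose error bounds that degrade slowly enough in $p(t)$''). The paper identifies a concrete structural obstruction you do not address: the variable $D_{1,R}(0)$, which counts pairs $\{v,-v\}\subset S$, follows the trajectory $\tfrac12(p^{-2/3}-1)$ and hence grows like a genuine power of $n$, while $D_2(v)\approx 3n^{1/2}tp$ is shrinking. At the conjectured time $t^2\approx\tfrac23\log n$ one has $p\approx n^{-1/2}$, so $D_{1,R}(0)\approx n^{1/3}$ whereas $D_2(v)$ is only polylogarithmic. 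But $D_{1,R}(0)$ sits inside the correction term in the basic identity $D_2(v)=D_{2,L}(v)+D_{2,R}(v)-O(1+D_{1,R}(2v)+D_{1,R}(0))$ and hence pollutes the expected one-step changes of essentially every tracked variable. Once this correction exceeds the main term, the system of variables you propose to track is no longer closed in any useful sense, and no recalibration of the error envelopes $f_{\bullet}$ can fix it; one needs new variables or a different decomposition that isolates the non-pseudorandom $\pm v$ effect. The paper says exactly this after stating the conjecture.

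Your discussion of the upper bound (endgame contraction once $|Q|=o(i)$) is a reasonable template borrowed from the $K_3$-free analysis, but note that it presupposes the trajectories remain valid all the way into the critical window, which is precisely the unresolved lower-bound issue above. So as written, both halves of the argument rest on the same missing ingredient.
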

Even proving the lower bound of the conjecture presents considerable difficulty, particularly due to the fact that we suspect $D_{1, R}(0)$ becomes larger than $D_2(v)$ by that time (see equation \eqref{d2est} to see why this would wreak havoc on our present analysis of the process). 

\section{Proof Overview}
\label{sec:proof}

We appeal to the usual differential equations method to establish dynamic concentration of the random variables around the trajectories we heuristically derived. See Wormald's survey \cite{nick2} for a nice introduction to the standard method and a general theorem about random variables following trajectories given by differential equations. See also Warnke's note \cite{Wanote} for a nice short proof of a stronger version of Wormald's general theorem. In the standard differential equations method we define a ``good event" stipulating that our random variables are close to their trajectories, and a family of martingales such that if a random variable strays far from its trajectory then one of our martingales has a large deviation, which is unlikely. This method is employed by many of the previously mentioned papers including \cite{BaBe, Hind, Bo10, Hfree, BKtridynamic, FGM, P1, P3, Wa2, Wa3, Wz2, GW}. For two more more recent uses of the method see Bohman and Warnke \cite{Bowa} as well as Bennett, Dudek and Zerbib \cite{BDZ}. Our present proof most resembles \cite{Bo10}.

Define $$i_0:= \displaystyle \frac{1}{6}  n^{1/2} \log^{1/2} n. $$ 
Define the stopping time $T$ as the minimum of $i_0$ and the first step $i$ that any of the following bounds fail: 

\begin{eqnarray}
& \left|D_{3,L}(v) - 2np^2\right| \leq 2f_3   \;\;\;\; \forall v \notin \pm S(i) \cup \{0\}, &\left|D_{2, L}(v) - 2n^{1/2}tp\right| \leq 2f_2  \;\;\;\; \forall v \notin \pm S(i) \cup \{0\}, \nonumber\\
&  \left|D_{3,R}(v) - np^2\right| \leq f_3    \;\;\;\;  \forall v \notin S(i)\cup \{0\},  &  \left|D_{2,R}(v) -  n^{1/2}tp\right| \leq f_2  \;\;\;\;  \forall v \notin S(i)\cup \{0\},\nonumber\\
& \left|D_{3,R}(0) - np^{4/3}\right| \leq f_{3, 0}, &  \left|D_{2,R}(0) - n^{1/2}tp^{1/3} \right| \leq f_{2, 0},  \nonumber\\
&D_{1, L}(v), D_{1, R}(v) \leq \log^2 n \;\;\;\;   \forall v \neq 0, &  \left|D_{1, R}(0) - \frac{1}{2}  p^{-2/3} \right| \leq f_{1, 0},  \nonumber\\
& \left|Q - 2n p  \right| \leq  f_q. \label{estimates}
\end{eqnarray}

Theorem \ref{thm:main} is proved in sections 6-11.  We will show that, for specific choices of the error functions $f$, that the stopping time $T=i_0$  with high probability. This dynamic concentration result will in turn imply that the  algorithm produces a set of size at least $i_0$ with high probability. These error functions must be carefully chosen to satisfy certain inequalities (``variation equations"), which arise from calculations in subsequent sections. If the reader wishes to look ahead at the variation equations, they are on lines \eqref{d2vareq}, \eqref{d2rvareq}, \eqref{d3vareq}, \eqref{d3rvareq}, \eqref{qvareq}, and \eqref{c0vareq}.

\begin{align*}
 &f_3   = p^{-4}\rbrac{4t^2 + 8t+ 2 } n^{3/4} \log^3 n ,  \\ 
 &f_2   = p^{-5}\rbrac{2t^2 + 4t+ 2 } n^{1/4} \log^3 n,   \\
 &f_{3,0}   = p^{-14/3}\rbrac{5t^2 + 5t+ 1 } n^{3/4} \log^3 n,  \\
 &f_{2,0}   = p^{-17/3}\rbrac{5t^2 + 3t+ 1 } n^{1/4} \log^3 n,   \\
 &f_{1, 0} = p^{-20/3} \rbrac{t+1} n^{-1/4} \log^3 n   + p^{-1/3} \log n,\\
  &f_q =  p^{-5} \rbrac{8t+2}n^{3/4} \log^3 n. \\
\end{align*}

A note on checking the variation equations: they can be verified in a straightforward (though perhaps a bit tedious) manner as follows. On the left side of the inequality, plug in the values of all of the functions. There will be a common power of $n$, a power of $\log n$ and a power of $p$ that can be factored out of everything, leaving a factor which is just a polynomial in $t$. That polynomial will have no positive coefficients and so we just bound it using its constant term (which is valid since $t \ge 0$). For example, to verify \eqref{d3vareq}, note that

\begin{align*}
6n^{-1/2}tf_3  +6pf_2  + \frac32 n^{-1/2} t p f_q - 2n^{-1/2} f_3  ' = p^{-4}\rbrac{-24t^3-24t^2-t-4}n^{1/4} \log^3 n.
\end{align*}

Note that the bounds in \eqref{estimates} give tight estimates (i.e. accurate to within a multiplicative factor of $(1+o(1))$) for all our random variables. Indeed, first note that we have $p(t(i_0))= n^{-1/48}.$ Then, for example, note that we have $f_3 = \tilde{\Theta}\rbrac{p^{-4}n^{3/4}}= o(np^2)$ and so the first bound in \eqref{estimates} tells us that $D_{3, L} = (1+o(1)) 2np^2$. Similarly all the error terms in \eqref{estimates} are much smaller than their respective corresponding main terms.

\section{Preliminaries}

In this section we prove several bounds that are necessary for our calculations. Most of these bounds are used to justify big-O terms later. 

For $v \neq 0$, we anticipate $D_2(v) \approx D_{2,L}(v) + D_{2,R}(v)$. But to make this estimate valid we need to bound the number of $q \in Q$ such that $q \neq v$ and there are two edges $e_1, e_2 \in D_{2, L}(v) \cup D_{2, R}(v)$ such that $q \in e_1, e_2$. Note that $v$ is only in $O(1)$ many edges of size $2$, so let us assume $|e_1| = |e_2|=3$. In other words, for some $s_1, s_2 \in S$ we have $e_j = \{v, q, s_j\}$.  There are cases to consider, according to how the corresponding equations are arranged. Without loss of generality we have one of the following cases:

\begin{enumerate}

\item $v+s_1=q$ and $v+q = s_2$. Then $s_1+s_2=2v$ (so we have $\{s_1, s_2\} \in D_{1, R}(2v)$).

\item $v+s_1=q$ and $q+s_2=v$. Then $s_1+s_2=0$ (so we have $\{s_1, s_2\} \in D_{1, R}(0)$). 
\end{enumerate}
Thus, before stopping time $T$ we have 

\begin{equation}\label{d2est}
D_2(v) = D_{2,L}(v) + D_{2,R}(v) - O(1+ D_{1, R}(2v) +D_{1, R}(0))
\end{equation}

We now show that if $v \neq 0$ then $v$ does not get closed too many times. Note that $D_{1, L}(v)$ only increases in size on steps when we choose an element of $D_{2, L}(v)$, and on such steps $D_{1, L}(v)$ can increase by at most $2$. Before the stopping time $T$, we have that $\frac{D_{2, L}(v)}{Q} \le 2 n^{-1/2}t$. Thus, $C(v,i)$ is stochastically dominated by $2R$ where $R \sim Bi(i,  2n^{- 1/2} \log{1/2} n)$. An application of the Chernoff bound then tells us that $R$ does not get bigger than $\log^2 n$ w.h.p., and thus the stopping time $T$ does not happen due to the condition on $D_{1, L}(v)$. Bounding $D_{1, R}(v)$ is similar. 


We now bound the size of $D_2(v_1) \cap D_2(v_2)$, for $v_1, v_2 \neq 0, v_1 \neq \pm v_2$. Again, each of $v_1, v_2$ is only in $O(1)$ many edges of size $2$, so we assume edges have size $3$. Then for each $q \in D_2(v_1) \cap D_2(v_2)$ there is a pair $s_1, s_2 \in S$ such that both $\{v_1,s_1,q\}$ and $\{v_2,s_2,q\} $ are in $E$. There are cases to consider according to how each of the equations is arranged, but in each case we reach one of the following conclusions: $\{v_1+v_2, s_1, s_2\} \in E$, $\{v_1-v_2, s_1, s_2\} \in E$, or $\{v_2 - v_1, s_1, s_2\} \in E$. Thus, by our bounds on the sizes of sets of the form $D_{1, L}$ and $D_{1, R}$ we have, for $v_1, v_2 \neq 0, v_1 \neq \pm v_2$ that
\begin{equation}\label{d2int}
|D_2(v_1) \cap D_2(v_2)| = O(\log^2 n)
\end{equation}

To finish this section, we present a couple of lemmas which we will use several times to estimate things. The following lemma will be used to estimate fractions based on estimates of the numerator and denominator.

\begin{lemma} \label{estlemma}

For any real numbers $x, y, \epsilon_x, \epsilon_y$, if we have $x,y \neq 0$ and $\left|\frac{\epsilon_x}{x}\right|, \left|\frac{\epsilon_y}{y}\right|\le \frac{1}{2}$, then

$$\frac{x+ \epsilon_x}{y+\epsilon_y} - \frac{x}{y} = \frac{y \epsilon_x - x \epsilon_y}{y^2} + O\left(\frac{y \epsilon_x \epsilon_y + x \epsilon_y^2}{y^3} \right)$$

\end {lemma}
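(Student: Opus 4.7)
The plan is to factor $y$ out of the denominator and expand using a geometric series. Specifically, I would start by writing
$$\frac{x+\epsilon_x}{y+\epsilon_y} = \frac{x+\epsilon_x}{y} \cdot \frac{1}{1+\epsilon_y/y}.$$
Since $|\epsilon_y/y| \le 1/2$, I can apply the identity $\frac{1}{1+z} = 1 - z + \frac{z^2}{1+z}$, whose remainder satisfies $\left|\frac{z^2}{1+z}\right| \le 2z^2$ for $|z|\le 1/2$. This gives
$$\frac{1}{1+\epsilon_y/y} = 1 - \frac{\epsilon_y}{y} + O\!\left(\frac{\epsilon_y^2}{y^2}\right),$$
with an absolute implicit constant. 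The hypothesis $|\epsilon_y/y|\le 1/2$ is used precisely here to keep the constant in the remainder under control.

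Next, I would multiply through and collect terms. The leading contributions are $\frac{x}{y} + \frac{\epsilon_x}{y} - \frac{x \epsilon_y}{y^2} - \frac{\epsilon_x \epsilon_y}{y^2}$, and multiplying $\frac{x+\epsilon_x}{y}$ by the $O(\epsilon_y^2/y^2)$ remainder contributes an additional error of $O\!\left(\frac{x \epsilon_y^2}{y^3} + \frac{\epsilon_x \epsilon_y^2}{y^3}\right)$. Subtracting $\frac{x}{y}$ from both sides and rewriting $\frac{\epsilon_x}{y} - \frac{x \epsilon_y}{y^2} = \frac{y \epsilon_x - x \epsilon_y}{y^2}$ recovers the main term stated in the lemma.

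The final step is to absorb the remaining stray pieces into the stated big-O. The term $-\frac{\epsilon_x \epsilon_y}{y^2}$ equals $-\frac{y\epsilon_x\epsilon_y}{y^3}$, which is one of the summands inside the big-O. The term $\frac{\epsilon_x\epsilon_y^2}{y^3}$ is bounded in absolute value by $\tfrac{1}{2}\cdot\frac{|\epsilon_x\epsilon_y|}{y^2} = \tfrac{1}{2}\cdot\frac{|y\epsilon_x\epsilon_y|}{|y|^3}$ using $|\epsilon_y/y|\le 1/2$ a second time, so it is absorbed as well.

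There is no substantive obstacle here: the lemma is a routine first-order Taylor expansion for a ratio. The only points requiring care are making the constant in the geometric-series remainder absolute (which the hypothesis $|\epsilon_y/y|\le 1/2$ ensures) and verifying that each stray cross-term is dominated by one of the two summands inside the big-O, which follows in each case from the same $1/2$ bound on $|\epsilon_y/y|$.
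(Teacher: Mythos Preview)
Your proof is correct and follows essentially the same approach as the paper: both expand $\frac{1}{1+\epsilon_y/y}=1-\frac{\epsilon_y}{y}+O\!\left(\frac{\epsilon_y^2}{y^2}\right)$ and then collect terms, the only cosmetic difference being that the paper first factors out $\frac{x}{y}$ rather than $\frac{1}{y}$. Your explicit check that the stray term $\frac{\epsilon_x\epsilon_y^2}{y^3}$ is absorbed via $|\epsilon_y/y|\le\frac{1}{2}$ is a nice touch that the paper leaves implicit.
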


\begin{proof}

\begin{align*}
\frac{x+ \epsilon_x}{y+\epsilon_y} - \frac{x}{y} &= \frac{x}{y} \left\{ \left(1+\frac{\epsilon_x}{x} \right) \cdot \frac{1}{1+\frac{\epsilon_y}{y}} -1 \right\}\\
&= \frac{x}{y} \left\{ \left(1+\frac{\epsilon_x}{x} \right) \cdot \left[1 - \frac{\epsilon_y}{y} + O\left(\frac{\epsilon_y^2}{y^2} \right) \right] -1 \right\}\\
&= \frac{x}{y} \left\{ \frac{\epsilon_x}{x} - \frac{\epsilon_y}{y} + O\left(\frac{\epsilon_x \epsilon_y}{x y}+\frac{\epsilon_y^2}{y^2} \right)\right\}\\
&= \frac{y \epsilon_x - x \epsilon_y}{y^2} + O\left(\frac{y \epsilon_x \epsilon_y + x \epsilon_y^2}{y^3} \right)
\end{align*}

\end{proof}

\section{Tracking the \texorpdfstring{$D_2$}{} variables}

In this section we prove dynamic concentration of the $D_{2}$-type variables. We start with  $D_{2,L}(v)$ for $v \neq 0$.

  For an arbitrary random variable $V$ we define
\[ \Delta V(i) = V(i+1) - V(i). \]
We let $ {\mathcal F}_i $ be the filtration of the probability space
given by the first $i$ steps of the process.
We need to estimate $E[ \Delta D_{2,L}(v) | \mathcal{F}_i]$ for  $ i \le T$.  The one step change $\Delta D_{2,L}(v)$ has both positive and negative contributions. $D_{2,L}(v)$ gains a pair $\{v,q\}$ when we choose an element $q'$ such that $\{v,q,q'\} \in D_{3,L}(v)$, except in the case where we happen to have $q \in D_{2}(q')$ (in which case we may conclude that either $2q'$ or $2q$ is in $D_2(v)$). 
  $D_{2,L}(v)$ loses a pair $\{v,b\}$ when $b$ is chosen or closed.

Thus we can put

$$E[ \Delta D_{2,L}(v) | \mathcal{F}_i] = \frac{1}{Q} \left\{ 2D_{3,L}(v) +O(n^{1/2} tp) - \sum_{\{v,q, s\}\in D_{2, L}(v)} D_{2}(q) \right\}$$

In the sum above, it is intended that $q \in Q, s \in S$, so we are summing all the ways to lose an edge in $\{v,q, s\} \in D_{2, L}(v)$ due to closing $q$. The big-O term absorbs the error due to: the possibility that we choose some $q \in \{v, q, q'\} \in D_{3, L}(v)$ such that $q' \in D_2(q)$; the possibility of losing $\{v,q, s\}$ due to choosing $q$;  and the effect of the $O(1)$ many edges of size $2$  in $D_{2, L}(v)$. 

Almost all of the terms in the sum are $D_2(q) \approx D_{2, L}(q) + D_{2, R}(q) \approx 3n^{1/2} tp$, by \eqref{d2est} and our estimates on the variables we're tracking. However, we may have some terms with $q$ such that $-q \in S$, in which case $D_{2,L}(q)=0$. However, supposing that $\{v,q, s\}\in D_{2,L}(v)$  and $-q \in S$, we conclude (by considering cases) that either $\{v, s, -q\} \in D_{1,R}(v)$ or $\{-v, s, -q\} \in D_{1, R}(-v)$. Thus there are $O(\log^2 n)$ many such terms in the sum. Now applying \eqref{d2est}, our estimate for $Q$, lemma  \ref{estlemma}, and recalling  $t = O(\log^{1/2} n)$ we get

\begin{eqnarray}
&& E[ \Delta D_{2,L}(v) | \mathcal{F}_i] =   2p - 3t^2 p + 2n^{-1}p^{-1} f_3  + \rbrac{1+\frac32 t^2} n^{-1} f_q + 6n^{-1/2}tf_2 \nonumber   \\
&& \qquad + \tilde{O}\left(n^{-2}p^{-2}f_q f_3 +n^{-3/2}p^{-1}f_q f_2+ n^{-1}p^{-1}f_2 ^2 + n^{-1/2}  + n^{-1/2}  D_{1, R}(0) \right) \nonumber\\
&&=2p - 3t^2 p+ 2n^{-1}p^{-1} f_3     + \rbrac{1+\frac32 t^2} n^{-1}  f_q  + 6n^{-1/2}tf_2+ \tilde{O}\left(n^{-1/2}  p^{-11} \right).\label{d2trend}
\end{eqnarray}



For each vertex $v \neq 0$  , we define the sequence of random variables

\begin{displaymath}
   D_{2,L}^+(v,i)  := \left\{
     \begin{array}{lr}
       D_{2,L}(v,i) - 2n^{1/2} tp - 2f_2  & : \pm v \notin S(i)\\
       D_{2,L}^+(v,i-1) & : otherwise
     \end{array}
   \right.
\end{displaymath} 
We will show that the sequence of random variables $D_{2,L}^+(v,0) \ldots D_{2,L}^+(v,T)$ is a supermartingale, and then use a deviation inequality to show that w.h.p. $D_{2,L}^+(v,i)$ is never positive (and hence $D_{2,L}(v,i)$ does not violate its upper bound). For $ i < T$

\begin{eqnarray*}
&& E[ \Delta D^+_{2,L}(v) | \mathcal{F}_i]  \le  2n^{-1}p^{-1} f_3  + \rbrac{1+\frac32 t^2} n^{-1} f_q  + 6n^{-1/2}tf_2  -2n^{-1/2}f_2 '\\
&& \qquad + \tilde{O}\left(n^{-1/2}  p^{-11}  + n^{-1}f_2 ''\right)\\
&& \le -\tilde{\Omega} \left(n^{-1/4}p^{-5}   \right)
\end{eqnarray*}
Note that we have used \eqref{d2trend}, and approximated the $1$-step change of deterministic functions using derivatives (i.e. Taylor's theorem). The last line can be verified by observing that 
\begin{equation}
2n^{-1}p^{-1} f_3  + \rbrac{1+\frac32 t^2} n^{-1} f_q + 6n^{-1/2}tf_2 -2n^{-1/2}f_2 '\le -2p^{-5}n^{-1/4}\log^3 n.\\ \label{d2vareq} 
\end{equation}

Now we use Azuma-Hoeffding to bound the probability that the supermartingale \newline $D_{2,L}^+(v,0) \ldots D_{2,L}^+(v,T)$ becomes positive.

\begin{lemma}Let $X_j$ be a supermartingale, with $|\Delta X_i| \leq c_i$ for all $i$. Then

$$P(X_m - X_0 \geq a) \leq \displaystyle \exp\left(-\frac{a^2}{2 \displaystyle \sum_{i\leq m} c_i^2 }\right).$$ \end{lemma}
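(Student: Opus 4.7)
The plan is to prove this by the standard Chernoff/exponential moment method applied to the martingale differences $\Delta X_i = X_i - X_{i-1}$, together with Hoeffding's lemma to control the conditional moment generating function of each increment. Writing $X_m - X_0 = \sum_{i=1}^m \Delta X_i$, I would fix a parameter $\lambda > 0$ to be optimized later and use Markov's inequality on the exponential:
\[
P(X_m - X_0 \ge a) \le e^{-\lambda a}\, E\!\left[\exp\!\left(\lambda \sum_{i=1}^m \Delta X_i\right)\right].
\]

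Next I would peel off the increments one at a time via the tower property. Conditioning on $\mathcal{F}_{m-1}$, the first $m-1$ factors are measurable, so
\[
E\!\left[e^{\lambda(X_m - X_0)}\right] = E\!\left[e^{\lambda(X_{m-1}-X_0)} \cdot E\!\left[e^{\lambda \Delta X_m} \mid \mathcal{F}_{m-1}\right]\right].
\]
The key estimate is Hoeffding's lemma applied conditionally: if $Y$ is a random variable with $|Y|\le c$ and $E[Y] \le 0$, then $E[e^{\lambda Y}] \le e^{\lambda^2 c^2/2}$. The proof of this is a short convexity argument, bounding $e^{\lambda y}$ on $[-c,c]$ by the chord joining the endpoints and then using $E[Y]\le 0$ to kill the linear term. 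Applied to $Y = \Delta X_i$ conditionally on $\mathcal{F}_{i-1}$ (using that $X$ is a supermartingale, so $E[\Delta X_i\mid \mathcal{F}_{i-1}] \le 0$, and that $|\Delta X_i|\le c_i$), this gives
\[
E\!\left[e^{\lambda \Delta X_i}\mid \mathcal{F}_{i-1}\right] \le e^{\lambda^2 c_i^2/2}.
\]
Iterating this bound down from $i=m$ to $i=1$ yields $E[e^{\lambda(X_m-X_0)}] \le \exp\!\left(\tfrac{\lambda^2}{2}\sum_{i\le m} c_i^2\right)$.

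Combining with the Markov step, $P(X_m-X_0 \ge a) \le \exp\!\left(-\lambda a + \tfrac{\lambda^2}{2}\sum_{i\le m} c_i^2\right)$, and optimizing by choosing $\lambda = a/\sum_{i\le m} c_i^2$ yields the claimed bound.

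The only nontrivial step is Hoeffding's lemma itself, and in this supermartingale setting the subtle point is that we only have $E[\Delta X_i\mid \mathcal{F}_{i-1}]\le 0$ rather than equality. This is handled by noting that the chord bound $e^{\lambda y} \le \frac{c-y}{2c}e^{-\lambda c} + \frac{c+y}{2c}e^{\lambda c}$ on $[-c,c]$ gives, after taking conditional expectations, an upper bound which is an increasing function of $E[Y\mid \mathcal{F}_{i-1}]$ (since $e^{\lambda c}\ge e^{-\lambda c}$ for $\lambda\ge 0$); so replacing this conditional mean by $0$ only enlarges the bound, leaving the symmetric estimate $E[e^{\lambda Y}\mid \mathcal{F}_{i-1}]\le \cosh(\lambda c)\le e^{\lambda^2 c^2/2}$. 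Everything else is routine algebra and optimization of $\lambda$.
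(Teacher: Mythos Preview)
Your proof is correct and is exactly the standard exponential-moment derivation of the Azuma--Hoeffding inequality. The paper does not actually prove this lemma; it simply states it as the well-known Azuma--Hoeffding bound and applies it, so there is nothing to compare against beyond noting that your argument is the usual textbook proof.
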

We now bound the largest possible $1$-step changes in the supermartingale. By examining all the contributions (positive and negative) we see that the largest possible $1$-step change is a large negative contribution due to the algorithm choosing some vertex $v'$ such that $D_2(v')$ contains a lot of vertices $q$ in an edge $\{v, q, s\} \in D_{2, L}(v)$. Using \eqref{d2int} to bound the number of such $q$ (for any fixed $v'$) we get

$$\left| \Delta D_{2,L}^+(v,i)  \right| =O( \log^2 n) $$
Thus, applying Azuma-Hoeffding we see that the event that the supermartingale $D_{2,L}^+(v)$ ever becomes positive has probability at most 

$$\exp\left\{ - \Omega \left( \frac{(n^{1/4} \log^{3} n )^2}{ i_0 (\log^2 n)^2} \right) \right\} = o(n^{-1}).$$
As there are at most $O(n)$ such supermartingales, with high probability none of them have such a large upward deviation, and $D_{2,L}(v)$ stays below its upper bound for all $v$. 

The lower bound for $D_{2,L}(v)$ is similar, as are the bounds for $D_{2,R}(v)$ for $v \neq 0$.

Now we address $D_{2,R}(0)$. We have

$$  E[ \Delta D_{2,R}(0) | \mathcal{F}_i] = \displaystyle \frac{1}{Q}\left\{ 2 D_{3,R}(0)+ O(n^{1/2} tp^{1/3}) -\sum_{\{0,q,s\}\in D_{2,R}(0)} D_{2}(q)  \right\},  $$
and note that for every $\{0,q,s\}\in D_{2,R}(0)$ we have that $-q =s \in S$ and so $D_2(q) = D_{2, R}(q)$. Thus

\begin{eqnarray}
&& \qquad E[ \Delta D_{2,R}(0) | \mathcal{F}_i] \nonumber \\
&&=p^{1/3} - \frac{1}{2}t^2p^{1/3} + \frac{1}{2}n^{-1/2}t f_{2, 0} + n^{-1}p^{-1}f_{3, 0} + \rbrac{\frac12 + \frac34 t^2}p^{-2/3}f_q + \frac{1}{2}n^{-1/2}tp^{-2/3}f_2 \nonumber\\
&& \qquad   + \tilde{O}\left(n^{-2}p^{-2}f_{3,0}f_q + n^{-3/2}p^{-5/3}f_2f_q + n^{-1}p^{-1}f_{2, 0}f_2 + n^{-1/2}p^{-1}f_{2, 0}f_q \right) \nonumber\\
&&  = n^{-1}p^{-1}f_{3, 0} + \rbrac{\frac12 + \frac34 t^2}p^{-2/3}f_q + \frac{1}{2}n^{-1/2}tp^{-2/3}f_2 + \tilde{O}\left(n^{-1/2}p^{-35/3} \right)    .    \label{d2rtrend}
\end{eqnarray}

We define the sequence of random variables 

$$D_{2,R}^+(0,i) := D_{2,R}(0,i) - n^{1/2} tp^{1/3} - f_{2, 0}.$$
We will show that the sequence $D_{2,R}^+(0,0) \ldots D_{2,R}^+(0,T)$ is a supermartingale. For $ i < T$,

\begin{eqnarray*}
&& E\left[ \Delta D^+_{2,R}(0,i)| \mathcal{F}_i \right] \leq  \frac{1}{2}n^{-1/2}t f_{2, 0}+ n^{-1}p^{-1}f_{3, 0} +\rbrac{\frac12 + \frac34 t^2}p^{-2/3}f_q + \frac{1}{2}n^{-1/2}tp^{-2/3}f_2 \\
&& \qquad  -n^{-1/2}f_{2, 0}'+ \tilde{O}\left(n^{-1/2}p^{-35/3}  + n^{-1}f_{2, 0}'' \right)\\
&& \le -\tilde{\Omega} \left(n^{-1/4}  \cdot p^{-17/3} \right)
\end{eqnarray*}
Note that in the first line we have used \eqref{d2rtrend}. The last line can be verified by observing that 
\begin{align}
& \frac{1}{2}n^{-1/2}t f_{2, 0}+ n^{-1}p^{-1}f_{3, 0}+  \rbrac{\frac12 + \frac34 t^2}p^{-2/3}f_q  + \frac{1}{2}n^{-1/2}tp^{-2/3}f_2  -n^{-1/2}f_{2, 0}'\nonumber\\
&\qquad \le -p^{-17/3} n^{-1/4} \log^3 n\label{d2rvareq}
\end{align} 

Again, the biggest possible $1$-step changes in the above supermartingale comes from the intersections of $D_2$ sets. Using \eqref{d2int},

$$\left| \Delta D^+_{2,R}(0,i)  \right| =O( \log^2 n )$$
Thus, the event that the supermartingale $D^+_{2,R}(0,i)$ ever becomes positive has probability at most

$$\exp\left\{ - \Omega \left( \frac{(n^{1/4} \log^{3} n )^2}{ i_0 (\log^2 n)^2} \right) \right\}  = o(1).$$
Therefore with high probability $D_{2,R}(0)$ stays below its upper bound. Proving the lower bound for $D_{2,R}(0)$ is similar. 

\section{Tracking the \texorpdfstring{$D_3$}{} variables}

The variable $D_{3,L}(v)$ is nonincreasing, and we lose a triple $\{v,q,q'\}\in D_{3,L}(v)$ whenever $q$ or $q'$ is closed or chosen.
Thus we have 
$$ E[ \Delta D_{3,L}(v) | \mathcal{F}_i] =- \displaystyle \frac{1}{Q} \sum_{\{v,q,q'\}\in D_{3,L}(v)}  D_2(q) \cup D_2(q') \cup \{q,q'\}  $$
Before the stopping time $T$, we can estimate the above expression. Most of the terms in the sum will have $-q, -q' \notin S$.   Supposing that $\{v,q,q'\}\in D_{3,L}(v)$ and $-q \in S$, we conclude that one of $q'$ or $-q'$ must be in $D_2(v)$. Thus there are $O \left(n^{1/2}tp \right)$ many such terms in the sum. Now applying \eqref{d2est}, our control on the variables before the stopping time $T$, lemma  \ref{estlemma}, and recalling  $t = O(\log^{1/2} n)$ we get

  \begin{eqnarray}
&&E[ \Delta D_{3,L}(v) | \mathcal{F}_i] =- 6n^{1/2}t p^2+ 6n^{-1/2}tf_3+ 6pf_2 + \frac32 n^{-1/2} t p f_q \nonumber \\
&&  \;\;\;\;\;\;\;\;\;\;+ \tilde{O}\left(p D_{1, R}(0) + p  + n^{-1}f_2f_q + n^{-3/2} p^{-1} f_3f_q + n^{-1}p^{-1}f_2 f_3  \right)\nonumber\\ 
&&= - 6n^{1/2}t p^2 + 6n^{-1/2}tf_3 + 6pf_2 + \frac32 n^{-1/2} t p f_q  +\tilde{O}\left( p^{-10} \right)    \label{d3trend}
\end{eqnarray}

We define the sequence of random variables 

$$D_{3,L}^+(v,i) := D_{3,L}(v,i) - 2n p^2 - f_{3}$$
We will show that the sequence $D_{3,L}^+(v,0) \ldots D_{3,L}^+(v,T)$ is a supermartingale. For $i < T$, we have the inequality

  \begin{eqnarray*}
&& E\left[ \Delta D^+_{3,L}(v,i)| \mathcal{F}_i \right] \le   6n^{-1/2}tf_3   +6pf_2 + \frac32 n^{-1/2} t p f_q - 2n^{-1/2} f_3  '\\
&&  \;\;\;\;\;\;\;\;\;\;+ \tilde{O}\left(p^{-10}   + n^{-1}f_3  ''\right)\\
&& \le -\tilde{\Omega} \left(p^{-4} n^{1/4}  \right)
\end{eqnarray*}
Note that in the first line we have used \eqref{d3trend}. The last line can be verified by observing that  
\begin{equation}
6n^{-1/2}tf_3  +6pf_2  + \frac32 n^{-1/2} t p f_q - 2n^{-1/2} f_3  ' \le -4p^{-4}n^{1/4} \log^3 n  \label{d3vareq} 
\end{equation}

Now we use Azuma-Hoeffding to bound the probability that the supermartingale becomes positive. By considering the total number of elements that get closed in any one step, we see

$$\left| \Delta D^+_{3,L}(v,i)  \right| \leq  O\left(n^{1/2}tp \right) \le O\left(n^{1/2} \log^{1/2}n \right).$$
Thus,applying Azuma-Hoeffding we see that the event that the supermartingale $D^+_{3,L}(v,i)$ is ever positive has probability at most

$$\exp\left\{ -\Omega\left(\frac{(n^{3/4} \log^{3} n )^2}{  \displaystyle  i_0 (n^{1/2} \log^{1/2}n)^2} \right)\right\}  = o(n^{-1})$$
As there are $O( n)$ such supermartingales, with high probability $D_{3,L}(v)$ stays below its upper bound for all $v$. 

The lower bound for $D_{3,L}(v)$ is similar, as are the bounds for $D_{3,R}(v)$ for $v \neq 0$.

Now we address $D_{3,R}(0)$. Using the same methods to estimate $E[ \Delta D_{3,L}(v) | \mathcal{F}_i]$ before the stopping time $T$, we get

  \begin{eqnarray*}
&& E[ \Delta D_{3,R}(0) | \mathcal{F}_i] = \displaystyle - \frac{1}{Q}\sum_{\{0,q, -q\}\in D_{3,R}(0)} D_{2}(q) \cup D_{2}(-q) \cup \{q, -q\} 
 \end{eqnarray*}
but note that each term in the sum has $D_{2, L}(q) = D_{2, L}(-q)$ and so the term is roughly $4n^{1/2}tp$. Thus

  \begin{eqnarray}
&&E[ \Delta D_{3,R}(0) | \mathcal{F}_i]  =- 2n^{1/2}tp^{4/3}  +2n^{-1/2}tf_{3, 0} + 2p^{1/3}f_2 + n^{-1/2}tp^{1/3}f_q \nonumber\\
&& \qquad + O\left(p^{1/3}D_{1, R}(0) + n^{-1}p^{-2/3}f_2f_q + n^{-3/2}p^{-1}f_{3,0}f_q+ n^{-1}p^{-1} f_2 f_{3, 0} \right) \nonumber\\
&&= - 2n^{1/2}tp^{4/3}  +2n^{-1/2}tf_{3, 0}+2p^{1/3}f_2 + n^{-1/2}tp^{1/3}f_q + \tilde{O}\left(p^{-32/3}  \right)  \label{d3rtrend}
\end{eqnarray}
We define the sequence of random variables 

$$D_{3,R}^+(0,i) := D_{3,R}(0,i) - n p^{4/3} - f_{3, 0}.$$
for $ i < T$

  \begin{eqnarray*}
&& E\left[ \Delta D^+_{3,R}(0,i)| \mathcal{F}_i \right] \le   2n^{-1/2}tf_{3, 0} +2p^{1/3} f_2 + n^{-1/2}tp^{1/3}f_q - n^{-1/2} f_{3, 0}'\\
&& \qquad + \tilde{O}\left(p^{-32/3}     + n^{-1}f_{3, 0}''\right)\\
&& \le -\tilde{\Omega} \left(p^{-14/3} n^{1/4}  \right).
\end{eqnarray*}
Note that in the first line we have used \eqref{d3rtrend} and the fact that $D_{3,R}(0)$ is in the critical interval. The last line can be verified by observing that  
\begin{equation}
 2n^{-1/2}tf_{3, 0} +2p^{1/3} f_2 + n^{-1/2}tp^{1/3}f_q - n^{-1/2} f_{3, 0}' \le -p^{14/3}n^{1/4} \log^3 n . \label{d3rvareq} 
\end{equation}

Now we use Azuma-Hoeffding to bound the probability that the supermartingale becomes positive. We have

$$\left| \Delta D^+_{3,R}(0,j)  \right| \leq  O\left(n^{1/2}tp \right)$$
Thus, applying Azuma-Hoeffding we see that the event that the supermartingale $ D^+_{3,R}(0)$ is ever positive has probability $o(1)$. Thus with high probability none of them have such large upward deviations, and $ D_{3,R}(0)$ stays below its upper bound. The lower bound for $ D_{3,R}(0)$ is similar.

\section{Tracking the \texorpdfstring{$Q$}{} variable}

We have
$$E[\Delta Q | \mathcal{F}_i] =  -1-\frac{1}{Q} \sum_{q \in Q} D_2(q)$$
and so 
\begin{equation}
    |E[\Delta Q | \mathcal{F}_i] - 3n^{1/2}tp| \le 3f_2 + O\rbrac{ D_{1,R}(0) + \log^2 n} \label{qtrend}
\end{equation}
 
We define the sequence of random variables $Q^+ (0) \ldots Q^+ (T)$, where 

$$Q^+ (i):=Q(i) - 2np - f_q $$
We will choose the function $f_q$ so that $Q^+$ is a supermartingale. We have the inequality

  \begin{align*}
 E\left[\Delta Q^+ | \mathcal{F}_i \right] &\leq  3f_2 - n^{-1/2} f_q ' + \tilde{O}(1 + p^{-2/3} + n^{-1}f_q '')\\
& \le -\tilde{\Omega} \left(   p^{-5} n^{1/4} \right)
\end{align*}
Note that in the first line we have used \eqref{qtrend}. The last line can be verified by observing that  
\begin{equation}
3f_2 - n^{-1/2} f_q ' \le -2 p^{-5}n^{1/4} \log^3 n .\label{qvareq} 
\end{equation}

Now we use Azuma-Hoeffding to bound the probability that the supermartingale becomes positive. We have

$$\left| \Delta Q^+(i_0,j)  \right| \leq  O(f_2 ) = \tilde{O}(p^{-5}n^{1/4})= \tilde{O}(n^{17/48}).$$
Thus, applying Azuma-Hoeffding we see that the event that the supermartingale $Q^+$ has such a large upward deviation has probability at most

$$\exp\left\{ - \tilde{\Omega}\left(\frac{(n^{3/4}  )^2}{  i_0 \cdot (n^{17/48})^2} \right)\right\}  = o(1)$$
Thus w.h.p. $Q$ stays below its upper bound. The lower bound for $Q$ is similar. \newline

\section{Tracking \texorpdfstring{$D_{1, R}(0)$}{} }

We have the expected $1$-step change

$$  E[ \Delta D_{1, R}(0) | \mathcal{F}_i] = \displaystyle \frac{D_{2,R}(0)}{Q}   $$
and so before $T$,

 \begin{eqnarray*}
&& E[ D_{1, R}(0) | \mathcal{F}_i] = n^{-1/2}tp^{-2/3}+ \frac{1}{2}n^{-1}p^{-1}f_{2, 0} + \frac14 n^{-3/2}tp^{-5/3} f_q+ O\left(n^{-2}p^{-2} f_q f_{2,0}  \right)
\end{eqnarray*}

We define the sequence of random variables 

$$D^+_{1, R}(0) := D_{1, R}(0) - \frac{1}{2}  p^{-2/3} - h$$
where 

$$h:=n^{-1/4} \log^3 n  (t+1)p^{-20/3}.$$
we have

  \begin{eqnarray*}
&& E\left[ \Delta D^+_{1, R}(0)| \mathcal{F}_i \right]\\
&& \leq  \frac{1}{2}n^{-1}p^{-1}f_{2, 0} + \frac14 n^{-3/2}tp^{-5/3} f_q -n^{-1/2}h'\\
&&\;\;\;\;\;\;\;\;\;\;+ O\left(n^{-2}p^{-2} f_q f_{2,0} + n^{-1}h'' \right)\\
&& \le -\tilde{\Omega} \left(n^{-3/4}  p^{-5/3}   \right)
\end{eqnarray*}
The last line can be verified by observing that 
\begin{equation}
 \frac{1}{2}n^{-1}p^{-1}f_{2, 0} + \frac14 n^{-3/2}tp^{-5/3} f_q -n^{-1/2}h' \le -\frac{1}{2}n^{-3/4} \log^3 n p^{-20/3 } .\label{c0vareq} 
\end{equation}  We will apply the following inequality due to Freedman \cite{F75}. 

\begin{lemma}\label{lem:Freedman}
Let $X_i$ be a supermartingale, with $\Delta X_i \leq C$ for all $i$, and $V(i) :=\displaystyle \sum_{k \le i} Var[\Delta X(k)| \mathcal{F}_{k}]$  Then
$$P\left[\exists i: V(i) \le v, X_i - X_0 \geq d \right] \leq \displaystyle \exp\left(-\frac{d^2}{2(v+Cd) }\right).$$ \end{lemma}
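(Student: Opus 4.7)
The plan is to follow the classical exponential supermartingale argument used by Freedman in \cite{F75}. For each parameter $\lambda > 0$, introduce the auxiliary process
$$M_i := \exp\bigl(\lambda (X_i - X_0) - \psi(\lambda)\, V(i)\bigr), \qquad \text{where} \quad \psi(\lambda) := \frac{e^{\lambda C} - 1 - \lambda C}{C^2}.$$
The key analytic input is the elementary inequality $e^{\lambda y} \le 1 + \lambda y + \psi(\lambda)\, y^2$ valid for every $y \le C$, which follows from the fact that $x \mapsto (e^x - 1 - x)/x^2$ is nondecreasing on $\mathbb{R}$. Taking conditional expectation given $\mathcal{F}_i$, applying the supermartingale hypothesis $E[\Delta X_i \mid \mathcal{F}_i] \le 0$, and then $1 + u \le e^u$, one obtains $E[e^{\lambda \Delta X_i} \mid \mathcal{F}_i] \le \exp\bigl(\psi(\lambda)\, \mathrm{Var}[\Delta X_i \mid \mathcal{F}_i]\bigr)$, from which $M_i$ is seen to be a nonnegative supermartingale. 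A routine reduction that subtracts off the predictable compensator handles the mismatch between the conditional second moment and the variance in the supermartingale (as opposed to the martingale) case.

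Next I would carry out a stopping-time argument. Let $\tau$ be the first index at which $X_i - X_0 \ge d$, with $\tau = \infty$ if no such index ever occurs. Since $V$ is nondecreasing in $i$, on the event
$$A := \bigl\{\exists i : V(i) \le v \text{ and } X_i - X_0 \ge d\bigr\}$$
we have $\tau < \infty$, $X_\tau - X_0 \ge d$ and $V(\tau) \le v$, so $M_\tau \ge \exp\bigl(\lambda d - \psi(\lambda) v\bigr)$ on $A$. Truncating at a deterministic index $N$, applying optional stopping to get $E[M_{\tau \wedge N}] \le 1$, and sending $N \to \infty$ via Fatou's lemma, Markov's inequality delivers
$$\Pr[A] \le \exp\bigl(-\lambda d + \psi(\lambda)\, v\bigr).$$

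The final step is to optimize the right-hand side over $\lambda > 0$. Setting the derivative of the exponent to zero gives the optimal choice $\lambda^* = C^{-1}\log(1 + dC/v)$. Substituting back and writing $u := dC/v$ collapses the exponent to $(v/C^2)\bigl[u - (1+u)\log(1+u)\bigr]$. The elementary inequality $(1+u)\log(1+u) - u \ge u^2/\bigl(2(1+u)\bigr)$ for $u \ge 0$ (verified by noting that both sides and their first derivatives vanish at $u=0$ while the second derivative of the left side dominates that of the right) then converts this to the desired bound $\exp\bigl(-d^2/(2(v+Cd))\bigr)$. The only mildly delicate ingredient is the passage through the potentially infinite stopping time $\tau$, which is handled routinely by truncation and monotone/dominated convergence; the substance of the proof is the exponential-supermartingale construction together with the final algebraic optimization.
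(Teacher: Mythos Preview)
The paper does not give its own proof of this lemma; it is stated as a known inequality due to Freedman and simply cited as \cite{F75}. Your sketch correctly reproduces Freedman's original exponential-supermartingale argument, so in that sense you are aligned with what the paper intends.

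One point deserves more care than you give it. Your ``routine reduction that subtracts off the predictable compensator'' to pass from conditional second moments to conditional variances is not entirely routine: if you replace $X_i$ by the Doob martingale $Y_i = X_i - \sum_{k<i} E[\Delta X_k\mid\mathcal{F}_k]$, then indeed $E[(\Delta Y_i)^2\mid\mathcal{F}_i]=\mathrm{Var}[\Delta X_i\mid\mathcal{F}_i]$ and $Y_i-Y_0\ge X_i-X_0$, but the increment bound degrades to $\Delta Y_i \le C - E[\Delta X_i\mid\mathcal{F}_i]$, which can exceed $C$. In the paper's application this is harmless because the authors immediately bound $\mathrm{Var}[\Delta X_k\mid\mathcal{F}_k]\le E[(\Delta X_k)^2\mid\mathcal{F}_k]$ anyway, so the second-moment version of Freedman (which your argument proves cleanly without the reduction) already suffices. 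If you want the lemma exactly as stated with variances, you should either verify directly that $\lambda\mu_i+\psi(\lambda)\mu_i^2\le 0$ for the relevant range of $\lambda$, or note that the statement is typically quoted with $V(i)=\sum_k E[(\Delta X_k)^2\mid\mathcal{F}_k]$ for supermartingales.
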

For our application of this inequality, we can use $C=1$. To determine a suitable value for $v$, note first that before $T$ we have

  \begin{eqnarray*}
&&Var[\Delta D^+_{1, R}(0, k)| \mathcal{F}_{k}] = Var[\Delta D^+_{1, R}(0, k)| \mathcal{F}_{k}] \le E \left[ \left(\Delta D_{1, R}(0, k) \right)^2 | \mathcal{F}_{k} \right]\\
&& = \frac{D_{2,R}(0,k)}{Q(k)} < n^{-1/2}t(k)p(t(k))^{-2/3}
\end{eqnarray*}
so we bound the sum

 \begin{eqnarray*}
&&\displaystyle \sum_{k \le i} n^{-1/2}t(k)p(t(k))^{-2/3} \le  \int_0^{t(i)}  \tau p(\tau)^{-2/3} d \tau <  p(t(i))^{-2/3} 
\end{eqnarray*}
(note that $t p^{-2/3}$ is increasing in $t$ so we may bound the sum with an integral) so we set $v= p(t(i))^{-2/3}$. Now we see by Lemma~\ref{lem:Freedman} that with $d= p(t(i))^{-1/3} \log n$, with high probability the supermartingale $D_{1, R}^+(0,i)$ is no larger than $d$. Therefore before $T$ we have the upper bound

$$D_{1, R}(0, i) \le \frac{1}{2}  p^{-2/3} + h + p^{-1/3} \log n.$$
The lower bound for $D_{1, R}(0,i)$ is similar. 

In particular,  $D_{1, R}(0,i_0) = \frac{1}{2} p(t(i_0))^{-2/3} (1+o(1)) = \Theta\left(n^{1/72}\right)$, while $D_{1, R}(v, i_0), D_{1, L}(v, i_0) = O\left(\log^2 n \right)$ for all $v \neq 0$. The behavior of $D_{1, R}(0)$, and the fact that $D_{1, R}(0)$ appears in many of our big-O terms, would seem to indicate that some new ideas would be needed to track the sum-free process much further (e.g. to prove Conjecture \ref{conj}).\newline
{\bf Acknowledgements:} The author would like to thank Tom Bohman for several helpful discussions

\bibliographystyle{plain}
\bibliography{refs}

\begin{thebibliography}{10}

\bibitem{BaBe}
Deepak {Bal} and Patrick {Bennett}.
\newblock {The bipartite $K_{2,2}$-free process and bipartite Ramsey number
  $b(2, t)$}.
\newblock {\em arxiv.org/abs/1808.02139}, 2019.

\bibitem{BMSsumfree}
J\'ozsef Balogh, Robert Morris, and Wojciech Samotij.
\newblock Random sum-free subsets of abelian groups.
\newblock {\em Israel J. Math.}, 199(2):651--685, 2014.

\bibitem{Hind}
Patrick Bennett and Tom Bohman.
\newblock A note on the random greedy independent set algorithm.
\newblock {\em Random Structures Algorithms}, 49(3):479--502, 2016.

\bibitem{BDZ}
Patrick {Bennett}, Andrzej {Dudek}, and Shira {Zerbib}.
\newblock {Large triangle packings and Tuza's conjecture in sparse random
  graphs}.
\newblock {\em arxiv.org/abs/1810.11739}, 2018.

\bibitem{Bo10}
Tom Bohman.
\newblock The triangle-free process.
\newblock {\em Adv. Math.}, 221(5):1653--1677, 2009.

\bibitem{Hfree}
Tom Bohman and Peter Keevash.
\newblock The early evolution of the {$H$}-free process.
\newblock {\em Invent. Math.}, 181(2):291--336, 2010.

\bibitem{BKtridynamic}
Tom Bohman and Peter Keevash.
\newblock Dynamic concentration of the triangle-free process.
\newblock In {\em The {S}eventh {E}uropean {C}onference on {C}ombinatorics,
  {G}raph {T}heory and {A}pplications}, volume~16 of {\em CRM Series}, pages
  489--495. Ed. Norm., Pisa, 2013.

\bibitem{Bowa}
Tom {Bohman} and Lutz {Warnke}.
\newblock {Large girth approximate Steiner triple systems}.
\newblock {\em arxiv.org/abs/1808.01065}, 2019.

\bibitem{FGM}
Gonzalo {Fiz Pontiveros}, Simon {Griffiths}, and Rob {Morris}.
\newblock {The triangle-free process and the Ramsey number $R(3,k)$}.
\newblock {\em arxiv.org/abs/1302.6279}, 2013.

\bibitem{F75}
David~A. Freedman.
\newblock On tail probabilities for martingales.
\newblock {\em Ann. Probability}, 3:100--118, 1975.

\bibitem{P1}
Michael~E. Picollelli.
\newblock The diamond-free process.
\newblock {\em Random Structures Algorithms}, 45(3):513--551, 2014.

\bibitem{P3}
Michael~E. Picollelli.
\newblock The final size of the {$C_\ell$}-free process.
\newblock {\em SIAM J. Discrete Math.}, 28(3):1276--1305, 2014.

\bibitem{Wa2}
Lutz Warnke.
\newblock The {$C_\ell$}-free process.
\newblock {\em Random Structures Algorithms}, 44(4):490--526, 2014.

\bibitem{Wa3}
Lutz Warnke.
\newblock When does the {$K_4$}-free process stop?
\newblock {\em Random Structures Algorithms}, 44(3):355--397, 2014.

\bibitem{Wanote}
Lutz {Warnke}.
\newblock {On Wormald's differential equation method}.
\newblock {\em arxiv.org/abs/1905.08928}, 2019.

\bibitem{Wz2}
Guy {Wolfovitz}.
\newblock {The $K_4$-free process}.
\newblock {\em arxiv.org/abs/1008.4044}, 2010.

\bibitem{GW}
Guy Wolfovitz.
\newblock Triangle-free subgraphs in the triangle-free process.
\newblock {\em Random Structures Algorithms}, 39(4):539--543, 2011.

\bibitem{nick2}
Nicholas~C. Wormald.
\newblock The differential equation method for random graph processes and
  greedy algorithms.
\newblock In {\em Notes on lectures at the Summer School on Randomized
  Algorithms at Antonin, Poland}, 1997.

\end{thebibliography}

\end{document}